\tikzset{join/.code=\tikzset{after node path={%
\ifx\tikzchainprevious\pgfutil@empty\else(\tikzchainprevious)%
edge[every join]#1(\tikzchaincurrent)\fi}}}
\tikzset{>=stealth',every on chain/.append style={join},
         every join/.style={->}}
\newtheorem{definition}{Definition}[section]
\newtheorem{theorem}[definition]{Theorem}
\theoremstyle{definition}
\newtheorem{remark}[definition]{Remark}
\newtheorem{example}[definition]{Example}
\newcommand{\noi}{\noindent}
\newcommand{\F}{\mathcal{F}}
\newcommand{\ra}{\rightarrow}
\begin{document}

\title[Common Fixed Points of Semihypergroup Representations]{Common Fixed Points of Semihypergroup Representations}

\author[C.~Bandyopadhyay]{Choiti Bandyopadhyay}

\address{Department of Mathematics, SRM University AP, Amaravati, India}

\email{choiti@ualberta.ca, choiti.b@srmap.edu.in}

\keywords{semihypergroup, action, amenability, fixed points, almost periodic functions, invariant mean, hypergroup, coset spaces, orbit spaces}
\subjclass[2020]{Primary 43A07, 43A62, 43A65, 47H10; Secondary 43A60, 43A85, 43A99, 46G12,  46E27}

\begin{abstract}
In a series of previous papers \cite{CB1, CB2, CB3, CB4}, we initiated a systematic study of semihypergroups and had a thorough discussion on certain analytic and algebraic aspects associated to this class of objects. In particular, we introduced the notion of semihypergroup actions on a general topological space and discussed different continuity, equivalence and natural fixed point properties of the same in \cite{CB4}.   Now in this article, we consider different kinds of representations of a semihypergroup on compact convex subsets of a locally convex space and explore equivalence relations between certain fixed-point properties of such representations and amenability of the space of almost periodic functions. Finally, we investigate how far these equivalence relations can be strengthened when in particular, we consider representations on the dual of a Banach space.
\end{abstract}

\maketitle

\section{Introduction}
\label{intro}

\quad In the research of abstract harmonic analysis, left amenability of a given function-space is determined by the existence of left translation invariant means on the function-space under consideration. Since its inception, research on this topic has been a pivotal area of investigation in this area due to its vast applicability and deep interplay with other prominent areas of functional analysis, operator algebras and analysis in general. On the other hand, in the classical setting of topological semigroups and groups, fixed-point properties of their respective representations on certain subsets of a Banach space  or  a locally convex topological space, have been another pivotal area of research, imperative for an in-depth understanding of the underlying space in consideration.

It is well documented that invariance of this kind, i.e, left amenability is intrinsically connected to the fixed point theory of different kinds of actions of the underlying object on a certain topological space. In particular, the amenability properties of a (semi)topological semigroup are  intrinsically  related to the existence of  fixed points of certain actions of the semigroup on certain spaces, and in fact, one can completely  characterize amenability of different function-spaces of a semigroup using such fixed-point properties (see \cite{DA, LT, LZ, MIT, PA, RI} for example, among others). In a previous article \cite{CB4}, we introduced actions in the broader category of semihypergroups and initiated an in-depth discussion on the interplay between the existence of fixed points of different kinds of semihypergroup actions and left amenability of a semihypergroup, in order to thereby realise where and why this theory deviates from the classical theory of locally compact semigroups and groups. 

\quad A semihypergroup, as one would expect, can be perceived simply as a generalization of a locally compact semigroup, where the product of two points is a certain compact set, rather than a single point. In a nutshell, a semihypergroup is essentially a locally compact topological space where the measure space is equipped with a certain convolution product, turning it into an associative algebra (which unlike hypergroups, need not have an identity or an involution).

\quad The concept of semihypergroups, first introduced as `semiconvos' by Jewett\cite{JE}, arises naturally in abstract harmonic analysis in terms of left(right) coset spaces of locally compact groups, and orbit spaces of affine group-actions which appear frequently in different areas of research in mathematics, including Lie groups, coset theory, homogeneous spaces, dynamical systems, and  ordinary and partial differential equations, to name a few. These objects arising from locally compact groups, although retain some interesting structures from the parent category, often fail to be a topological group, semigroup or even a hypergroup (see \cite{CB1,JE} for detailed reasons). The fact that semihypergroups admit a broader range of examples arising from different fields of research compared to classical semigroup, group and hypergroup theory, and yet sustains enough structure to allow an independent theory to develop, makes it an intriguing and useful area of study with essentially a broader range of applications.
 

\quad However, unlike hypergroups, the category of (topological) semihypergroups still lack an extensive  systematic theory on it. In a series  of previous papers \cite{CB1, CB2, CB3, CB4} we initiated developing a systematic theory on semihypergroups. Our approach towards it has been to define and explore some basic algebraic and analytic aspects and areas on semihypergroups, which are imperative for further development of any category of analytic objects in general. 


It has been a long-standing open problem to characterize certain coset spaces, in general, semitopological semihypergroups, that have common fixed points whenever they act on a nonempty weak* compact convex subset of a locally convex space or the dual of a  Banach space as weak* continuous and norm nonexpansive mappings. In this article, we advance the theory by providing some complete characterizations of the left amenability of the space of almost periodic functions, in terms of existence of common fixed points of such representations ofthe semihyperhypergroup. The rest of the article is organized as the following.

\quad In the next, \textit{i.e.}, second section of this article, we recall some preliminary definitions and notations given by Jewett in \cite{JE}, and some notions used in \cite{CB4} required for further development. We conclude the section with listing some important examples of semihypergroups and hypergroups. 

 
\quad The third and final section contains proofs of the main results of the article. We begin with discussing the concepts of certain kinds of semihypergroup representations such as affine, equicontinuous, non-expansive and orbit-bounded, to name a few. We first show in Theorem \ref{mt1} that in this much broader setting, common fixed points of separately continuous, equicontinuous, affine actions on certain compact convex subsets completely characterize the left amenability of the function space of almost periodic functions.  Later in Theorem \ref{mt2} we prove that a complete characterization of left amenability of the this function space can also be carried out in terms of existence of common fixed points of separately continuous, non-expansive semihypergroup actions on certain compact convex sets. Finally in Theorem \ref{mt3} we investigate how the conditions of this equivalence can be strengthened if instead of actions on compact subsets of locally convex spaces, we consider actions on duals of Banach spaces. We see that a complete characterization of left amenability is indeed possible in this case, provided the action in consideration has at least one bounded orbit. We conclude the section with discussing certain immediate open questions in this area of research.





\section{Preliminary}
\label{Preliminary}

\noi We first list a preliminary set of notations and definitions that we will use throughout the text. All the topologies throughout this text are assumed to be Hausdorff.


\noi For any locally compact Hausdorff topological space $X$, we denote by $M(X)$ the space of all regular complex Borel measures on $X$, where $ M_F^+(X), M^+(X)$ and $P(X)$ respectively denote the subsets of $M(X)$ consisting of all non-negative measures with finite support, all finite non-negative regular Borel measures  and all probability measures on $X$. For any measure $\mu$ on $X$, we denote by $supp(\mu)$ the support of the measure $\mu$. Moreover, $ B(X), C(X), C_c(X)$ and $C_0(X)$ denote the function spaces of all bounded functions, bounded continuous functions, compactly supported continuous functions and continuous functions vanishing at infinity on $X$ respectively.


\noi Unless mentioned otherwise, the space $M^+(X)$ is equipped with the \textit{cone topology}   \cite{JE}, \textit{i.e,} the weak topology on $M^+(X)$ induced by $ C_c^+(X)\cup \{\chi_{_X}\}$. We denote  the set of all compact subsets of $X$ by $\mathfrak{C}(X)$, and consider the natural \textit{Michael topology} \cite{MT} on it, which makes it into a locally compact Hausdorff space.  For any element $x\in X$, we denote by $p_x$ the point-mass measure or the Dirac measure at the point $ x $.


\noi For any three locally compact Hausdorff spaces $X, Y, Z$, a bilinear map $\Psi : M(X) \times M(Y) \rightarrow M(Z)$ is called \textit{positive continuous} if the following properties hold true.
\begin{enumerate}
\item $\Psi(\mu, \nu) \in M^+(Z)$ whenever $(\mu, \nu) \in M^+(X) \times M^+(Y)$.
\item The map $\Psi |_{M^+(X) \times M^+(Y)}$ is continuous.
\end{enumerate}


\noi Now we state the formal definition for a (topological) semihypergroup. Note that we follow Jewett's notion \cite{JE} in terms of the definitions and notations, in most cases.


\begin{definition}[Semihypergroup]\label{shyper} A pair $(K,*)$ is called a (topological) semihypergroup if they satisfy the following properties:


\begin{description} 
\item[(A1)] $K$ is a locally compact Hausdorff space and $*$ defines a binary operation on $M(K)$ such that $(M(K), *)$ becomes an associative algebra.

\item[(A2)] The bilinear mapping $* : M(K) \times M(K) \rightarrow M(K)$ is positive continuous.

\item[(A3)] For any $x, y \in K$ the measure $(p_x * p_y)$ is a probability measure with compact support.

\item[(A4)] The map $(x, y) \mapsto \mbox{ supp}(p_x * p_y)$ from $K\times K$ into $\mathfrak{C}(K)$ is continuous.
\end{description}
\end{definition}


\noi Note that for any $A,B \subset K$ the convolution of subsets is defined as the following:
$$A*B := \cup_{x\in A, y\in B} \ supp(p_x*p_y)  .$$

\noi  We define the concepts of left (resp. right) topological and semitopological semihypergroups, in accordance with similar concepts in the classical semigroup theory.


\begin{definition}
A pair $(K, *)$ is called a left (resp. right) topological semihypergroup if it satisfies all the conditions of Definition \ref{shyper}, with property {($A2$)} replaced by property {($A2 '$)} (resp. property {($A2 ''$)}), given as the following:

\begin{description}
\item[(A2$'$)] The map $(\mu, \nu) \mapsto \mu*\nu$ is positive and for each $\omega \in M^+(K)$ the map\\ $L_\omega:M^+(K) \rightarrow M^+(K)$ given by $L_\omega(\mu)= \omega*\mu$ is continuous.\\
\item[(A2$''$)] The map $(\mu, \nu) \mapsto \mu*\nu$ is positive and for each $\omega \in M^+(K)$ the map\\ $R_\omega:M^+(K) \rightarrow M^+(K)$ given by $R_\omega(\mu)= \mu*\omega$ is continuous.
\end{description}
\end{definition}


A pair $(K, *)$ is called a \textit{semitopological semihypergroup} if it is both left and right topological semihypergroup, \textit{i.e}, if the convolution $*$ on $M(K)$ is only separately continuous.  

For any Borel measurable function $f$ on a (semitopological) semihypergroup $K$ and each $x, y \in K$, we define the left translate $L_xf$ of $f$ by $x$ (resp. the right translate $R_yf$ of $f$ by $y$) as
$$ L_xf(y) = R_yf(x) = f(x*y) := \int_K f \ d(p_x*p_y)\ .$$

Unless mentioned otherwise, we will always assume the uniform (supremum) norm $||\cdot ||_u$ on $C(K)$ and $B(K)$. We denote by $\mathcal{B}_1$ the closed unit ball of $C(K)^*$. Similarly, for any linear subspace $\F$ of $C(K)$, we denote the closed unit ball of $\F^*$ as $\mathcal{B}_1(\F^*):= \{\omega \in \F^*: ||\omega|| \leq 1\}$. Moreover, $\F$ is called left (resp. right) translation-invariant if $L_xf\in \F$ (resp. $R_xf\in \F$) for each $x\in K, f\in \F$. We simply say that $\F$ is translation-invariant, if it is both left and right translation-invariant.

A function $f\in C(K)$ is called left (resp. right) uniformly continuous if the map $x\mapsto L_xf$ (resp. $x\mapsto R_xf$) from $K$ to $(C(K), ||\cdot ||_u)$ is continuous. We say that $f$ is \textit{uniformly continuous} if it is both left and right uniformly continuous. The space consisting of  all such functions is denoted by $UC(K)$, which forms a norm-closed linear subspace of $C(K)$. 

The left (resp. right) orbit of a function $f\in C(K)$, denoted as $\mathcal{O}_l(f)$ (resp. $\mathcal{O}_r(f)$), is defined as $\mathcal{O}_l(f) := \{L_xf : x\in K\}$ (resp. $\mathcal{O}_r(f) := \{R_xf : x\in K\}$). A function $f\in C(K)$ is called left (resp. right) almost periodic if we have that $\mathcal{O}_l(f)$ (resp. $\mathcal{O}_r(f))$ is relatively compact in $(C(K), ||\cdot ||_u)$. We showed in a previous work \cite[Corollary 4.4]{CB1} that a function $f$ on $K$ is left almost periodic if and only if it is right almost periodic. Hence we  regard any left or right almost periodic function on $K$ simply as an \textit{almost periodic function}, and denote the space of all almost periodic functions on $K$ as $AP(K)$. We further saw in \cite{CB1} that $AP(K)$ is a norm-closed, conjugate-closed (with respect to complex conjugation), translation-invariant linear subspace of $C(K)$ containing constant functions, such that $AP(K)\subseteq UC(K)$.

\noi An element $e$ in a semihypergroup $K$  is called a \textit{two sided identity} of $K$ if  $$p_x * p_e = p_e * p_x = p_x$$ for any $x\in K$. Note that a two sided identity of a semihypergroup is necessarily unique \cite{JE}.


\begin{remark}
Given a Hausdorff topological space $K$, in order to define a continuous bilinear mapping $* : M(K) \times M(K) \rightarrow M(K)$, it suffices to only define the measures $(p_x*p_y)$ for each $x, y \in K$.
 This is true since one can then extend  the convolution `$*$' bilinearly to $M_F^+(K)$. As $M_F^+(K)$ is dense in $M^+(K)$ in the cone topology \cite{JE}, one can further achieve a continuous extension of  `$*$' to $M^+(K)$ and hence to the whole of $M(K)$ using bilinearity.
\end{remark}


\vspace{0.03in}



\noi Now we list some well known examples \cite{JE,ZE} of semihypergroups and hypergroups. See \cite[Section 3]{CB1} for details on the constructions as well as the reasons why most of the structures discussed there, although attain a semihypergroup structure, fail to be hypergroups. 


\begin{example} \label{extr}

If $(S, \cdot)$ is a locally compact topological semigroup, then $(S, *)$ is a semihypergroup where $p_x*p_y = p_{_{x.y}}$ for any $x, y \in S$. Similarly, if $(G, \cdot)$ is a locally compact topological group with identity $e_G$, then $(G, *)$ is a hypergroup with the same bilinear operation $*$, identity element $e_G$  and the involution on $G$ defined as $x \mapsto x^{-1}$.

Note that $Z(S) =S$, $Z(G) = G$.
\end{example}

\begin{example} \label{ex2}
Take $T = \{e, a, b\}$ and equip it with the discrete topology. Define
\begin{eqnarray*}
p_e*p_a &=& p_a*p_e \ = \ p_a\\
p_e*p_b &=& p_b*p_e \ = \ p_b\\
p_a*p_b &=& p_b*p_a \ = \ z_1p_a + z_2p_b\\
p_a*p_a &=& x_1p_e + x_2p_a + x_3p_b \\
p_b*p_b &=& y_1p_e + y_2p_a + y_3p_b
\end{eqnarray*}

\noi where $x_i, y_i, z_i \in \mathbb{R}$ such that $x_1+x_2+x_3 = y_1+y_2+y_3 = z_1+z_2 = 1$ and $y_1x_3 = z_1x_1$. Then $(T, *)$ is a commutative hypergroup with identity $e$ and the identity function on $T$ taken as involution. In fact, any finite set can be given several (not necessarily equivalent)  semihypergroup and hypergroup structures.
\end{example}

\begin{example} \label{ex3}
Let $G$ be a locally compact group and $H$ be a compact subgroup of $G$ with normalized Haar measure $\mu$. Consider the left coset space $S := G/H = \{xH : x \in G\}$ and the double coset space $K := G/ /H = \{HxH : x \in G\}$ and equip them with the respective quotient topologies. Then $(S, *)$ is a semihypergroup and $(K, *)$ is a hypergroup where the convolutions are given as following for any $x, y \in G$ :
$$p_{_{xH}} * p_{_{yH}} = \int_H p_{_{(xty)H}} \ d\mu(t), \ \ \ \ p_{_{HxH}} * p_{_{HyH}} = \int_H p_{_{H(xty)H}} \ d\mu(t)  .$$
It can be checked \cite{CB1} that the coset spaces $(S, *)$ fail to have a hypergroup structure. 
\end{example}

\begin{example} \label{ex4}
Let $G$ be a locally compact topological group and $H$ be any compact group with normalized Haar measure $\sigma$. 
For any continuous affine action\cite{JE} $\pi$ of $H$ on $G$, consider the orbit space $\mathcal{O} := \{x^H : x \in G\}$, where for each $x\in G$,  $x^H =  \{\pi(h, x): h\in H\}$ is the orbit of $x$ under the action $\pi$. 


\noi Consider $\mathcal{O}$ with the quotient topology and the following convolution:
$$ p_{_{x^H}} * p_{_{y^H}} := \int_H \int_H p_{_{(\pi(s, x)\pi(t, y))^H}} \ d\sigma(s) d\sigma(t) .$$
\noi Then $(\mathcal{O}, *)$ becomes a semihypergroup. It can be shown \cite{BH, JE} that $(\mathcal{O}, *)$ becomes a hypergroup only if for each $h\in H$, the map $x\mapsto \pi(h, x) : G\ra G$ is an  automorphism.
\end{example}


\section{Main Results}

Recall \cite{CB3} that a semihypergroup action $\pi$ of $K$ on a locally convex Hausdorff topological vector space $E$ (also denoted by $(E, Q)$ where $Q$ is the family of  seminorms inducing the topology on $E$) is a map $\pi: K\times E\ra E$ such that the following conditions are satisfied:
\begin{enumerate}
\item For any $s, t \in K$, $x\in E$ we have 
$$\pi(s, \pi(t, x)) = \int_K \pi(\zeta,x) \ d(\delta_s*\delta_t)(\zeta).$$
\item Whenever $K$ has a two-sided identity $e$, for each $x\in E$  we have that $$\pi(e, x) =x \ .$$
\end{enumerate}

Such an action $\pi$ of $K$ on $E$ is called  \textit{(separately) continuous}, if the map $\pi$ is (separately) continuous on $K\times E$. For each $s\in K$, we denote by $\pi_s$ the co-efficient map $$x\mapsto \pi(s, x):E\ra E \ .$$ 

An action $\pi$ of $K$ is called \textit{affine} if $\pi_s$ is an affine map for each $s\in K$.  We say that some $x_0\in E$ is a \textit{common fixed point} of $\pi$ if $\pi_s(x_0)=x_0$ for each $s\in K$. In fact, using the fact that commutative semihypergroups are amenable \cite{CB1}, one can prove the existence of common fixed points for such semihypergroups. 

\begin{theorem} [\cite{CB3}] \label{main2}
Let $K$ be commutative  and $C$ be a compact convex subset of a locally convex Hausdorff vector space $(E, Q)$. Then any separately continuous affine action of $K$ on $C$ has a fixed point.
\end{theorem}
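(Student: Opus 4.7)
The plan is to reduce this to the classical Markov--Kakutani fixed point theorem. The key structural observation is that although $\{\pi_s : s \in K\}$ is a family of maps indexed by the semihypergroup $K$ (where products of points are probability measures rather than points), commutativity of $K$ is still strong enough to make this family a commuting family of continuous affine self-maps of $C$.

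First, I would verify that each $\pi_s$ is a continuous affine self-map of $C$: by hypothesis $\pi$ is separately continuous and affine, and $\pi_s(C)\subseteq C$ since the action is defined on $C$. Next, I would show that the family commutes. For any $s,t\in K$ and $x\in C$, the defining identity of a semihypergroup action gives
\[
\pi_s\pi_t(x) \;=\; \pi(s, \pi(t,x)) \;=\; \int_K \pi(\zeta, x)\, d(\delta_s*\delta_t)(\zeta),
\]
and symmetrically $\pi_t\pi_s(x) = \int_K \pi(\zeta,x)\, d(\delta_t*\delta_s)(\zeta)$. Since $K$ is commutative we have $\delta_s*\delta_t=\delta_t*\delta_s$, and hence $\pi_s\pi_t=\pi_t\pi_s$ as maps from $C$ to $C$.

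With this in hand, the classical Markov--Kakutani theorem---a commuting family of continuous affine self-maps of a nonempty compact convex subset of a locally convex Hausdorff space admits a common fixed point---delivers some $x_0\in C$ with $\pi_s(x_0)=x_0$ for every $s\in K$, which is exactly a common fixed point of the action $\pi$.

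The only real subtlety, and it is mild, lies in justifying the weighted-average identity for $\pi_s\pi_t$ so that commutativity of convolution on $K$ genuinely transfers to commutativity of the co-efficient maps on $E$. This amounts to an observation about $E$-valued integration of the continuous map $\zeta\mapsto \pi(\zeta,x)$ against the compactly supported probability measure $\delta_s*\delta_t$, which uses only separate continuity of $\pi$ together with property (A3) of the semihypergroup. Once this is secured, no amenability or invariant-mean machinery is needed in the commutative case; the result follows directly from Markov--Kakutani.
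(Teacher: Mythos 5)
Your proof is correct, and it takes a genuinely different route from the one the paper (and the cited source \cite{CB3}) indicates. The paper's stated route goes through amenability: one first shows that a commutative semihypergroup admits a left invariant mean, and then extracts a fixed point by the barycenter-type argument that also appears as the converse direction of Theorem \ref{mt1} in this article (approximate the LIM by a net of finite means $\sum_i \lambda_i^\alpha E_{s_i^\alpha}$, push these forward to convex combinations $\sum_i \lambda_i^\alpha \pi(s_i^\alpha,y)$ in $C$, take a cluster point, and verify invariance by testing against continuous affine functions separating points of $C$). You instead apply Markov--Kakutani directly to the family $\{\pi_s : s\in K\}$, and the key observation --- that commutativity of $K$ forces $\pi_s\pi_t=\pi_t\pi_s$ even though $\pi_s\pi_t$ is an average of coefficient maps rather than a single $\pi_u$ --- is exactly right, since $\pi_s\pi_t(x)=\int_K \pi(\zeta,x)\,d(\delta_s*\delta_t)(\zeta)$ is literally axiom (1) of a semihypergroup action and $\delta_s*\delta_t=\delta_t*\delta_s$ is the definition of commutativity. (Your ``only real subtlety'' is in fact a non-issue: the weighted-average identity does not need to be derived from an integration theory, because it is postulated in the definition of the action; all that remains is that each $\pi_s$ is a continuous affine self-map of the nonempty compact convex set $C$, which separate continuity and affineness give immediately, and Markov--Kakutani does not require the family to be closed under composition.) What each approach buys: yours is shorter and entirely elementary, avoiding invariant means altogether; the paper's amenability route is the one that scales to the non-commutative setting, where it becomes the template for the characterizations in Theorems \ref{mt1}--\ref{mt3}.
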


On the other hand, a stricter  fixed point property  serves as a sufficient condition for any semihypergroup $K$ to admit a LIM on $AP(K)$. 

\begin{theorem} [\cite{CB3}] \label{main3}
Assume that any jointly continuous affine action $\pi$ of $K$ on a compact convex subset $C$ of a locally convex vector space $(E, Q)$ has a common fixed point. Then there exists a LIM on $AP(K)$.
\end{theorem}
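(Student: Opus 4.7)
The plan is to apply the given fixed-point hypothesis to a canonical jointly continuous affine action of $K$ on the set of means of $AP(K)$. First I would set
$$C \ := \ \{m\in AP(K)^* : m\geq 0,\ m(\chi_{_K})=1\},$$
which by Banach--Alaoglu is a weak* compact convex subset of the locally convex Hausdorff space $AP(K)^*$ equipped with the weak* topology. I would then define $\pi:K\times C\ra C$ by
$$\pi(s,m)(f) \ := \ m(L_sf) \qquad (s\in K,\ m\in C,\ f\in AP(K)),$$
which is meaningful because $AP(K)$ is translation-invariant and hence $L_sf\in AP(K)$.

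Next I would verify that $\pi$ indeed lands in $C$ and is an affine semihypergroup action. Well-definedness follows from positivity of $L_s$ (since $\delta_s*\delta_y$ is a positive measure) together with $L_s\chi_{_K}=\chi_{_K}$ (since $\delta_s*\delta_y$ is a probability measure), while affineness in the $m$ slot is immediate from linearity of means. For the action identity
$$\pi(s,\pi(t,m)) \ = \ \int_K \pi(\zeta,m)\, d(\delta_s*\delta_t)(\zeta),$$
I would interpret the vector-valued integral on the right in the Gelfand--Pettis (weak*) sense, so that evaluating both sides at an arbitrary $f\in AP(K)$ and applying associativity of convolution together with Fubini reduces each to $m(g)$, where $g(y):=\int_K f\,d((\delta_s*\delta_t)*\delta_y)$.

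The delicate step will be the joint continuity of $\pi$, and this is where the specific structure of $AP(K)$ enters in an essential way. Given nets $s_\alpha\ra s$ in $K$ and $m_\alpha\ra m$ in the weak* topology on $C$, I would use the triangle-inequality bound
$$|\pi(s_\alpha,m_\alpha)(f) - \pi(s,m)(f)|\ \leq\ \|L_{s_\alpha}f - L_sf\|_u\ +\ |m_\alpha(L_sf) - m(L_sf)|,$$
valid since $\|m_\alpha\|\leq 1$. The second summand tends to $0$ by the definition of weak* convergence; the first tends to $0$ precisely because $AP(K)\subseteq UC(K)$ as recorded in Section~\ref{Preliminary}, which makes $s\mapsto L_sf$ norm-continuous on $K$. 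The hypothesis of the theorem then furnishes a common fixed point $m_0\in C$ of $\pi$; unravelling the fixed-point condition gives $m_0(L_sf)=m_0(f)$ for every $s\in K$ and every $f\in AP(K)$, which together with $m_0\in C$ is exactly the statement that $m_0$ is a LIM on $AP(K)$.
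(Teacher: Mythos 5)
Your proposal is correct, and it is essentially the same argument the paper uses for the analogous implication in the first half of the proof of Theorem \ref{mt1} (the theorem itself is only quoted here from \cite{CB3}): one runs the canonical dual left-translation action $\pi(s,m)=L_s^*m$ on the weak$^*$-compact convex set of means on $AP(K)$ and reads off a LIM from the fixed point. The only point specific to the present statement is joint (rather than separate or equi-) continuity, which you correctly obtain from $\|m_\alpha\|\leq 1$ together with the norm-continuity of $s\mapsto L_sf$ guaranteed by $AP(K)\subseteq UC(K)$.
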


In what follows, we see that although the intrinsically different structures of the underlying space and function-spaces in this broader setting warrants further caution and evolved Banach algebraic techniques, one can indeed extend a number of useful pivotal fixed-point characterizations of left amenability to this broad setting of semihypergroups, from the classical setting of  semigroups \cite{LA1, LZ}. 

Recall that given a compact subset $C$ of a separated locally convex space $E$, the subspace topology on $C$ is induced by a unique uniformity $\mathcal{U}$ corresponding to $C$ \cite[Chapter 6]{KE}, namely the family of all neighborhoods of the diagonal $\Delta= \{(x, x): x\in C\}$. We say that an action $\pi$ of $K$ on $C$ is \textit{equicontinuous} if for each $x\in C$ and $U\in \mathcal{U}$, there exists some $V\in \mathcal{U}$ such that $(\pi(s,x), \pi(s, y))\in U$ for each $s\in K$ whenever $(x, y)\in V$. An important subclass of such actions consists of the non-expansive actions. An action $\pi$ of $K$ on $(E, Q)$ is called \textit{non-expansive} if we have that $p(\pi(s, x) - \pi(s, y))\leq p(x - y)$ for each $p\in Q, s\in K$ and any $x, y\in E$.

Given a separately continuous action $\pi$ of $K$ on a compact Hausdorff space $X$, for each $y\in X$ and $f\in C(X)$ we define a map $T_y^\pi f : K \ra \mathbb{C}$ as $$T_y^\pi f(s):= f(\pi(s, y)) = f\circ \pi_s(y),$$ for each $s\in K$. It is easy to see that $T_y^\pi f$ is a continuous function. Moreover, it follows from the continuity of $f$ and $\pi_s$ that the orbit of the function $T_y^\pi f$ is relatively compact in $C(K)$, i.e, we will always have that $T_y^\pi f \in AP(K)$.

Now we proceed to deriving several pivotal characterizations of amenability of the function space $AP(K)$ in terms of the existence of common fixed points of such actions or representations.

\begin{theorem} \label{mt1}
$AP(K)$ is left amenable if and only if every separately continuous, equicontinuous affine action of $K$ on a compact convex subset $C$ of a separated locally convex space $E$ has a common fixed point.
\end{theorem}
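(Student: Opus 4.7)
The plan is to prove both implications via the standard dictionary between left invariant means and common fixed points, adapted to the semihypergroup setting: in the sufficiency direction I construct a canonical action on the set of means on $AP(K)$ whose common fixed points are precisely the LIMs, while in the necessity direction I realise a fixed point of a given action as a weak limit of mean-weighted averages of an orbit.

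For ($\Leftarrow$), let $C$ denote the set of means on $AP(K)$, i.e., the positive norm-one elements of $\mathcal{B}_1(AP(K)^*)$. Then $C$ is a weak$^*$-closed convex subset of the weak$^*$-compact ball $\mathcal{B}_1(AP(K)^*)$, hence a compact convex subset of the separated locally convex space $(AP(K)^*, \mathrm{weak}^*)$. Define $\pi : K \times C \to C$ by $\pi_s(m)(f) := m(L_s f)$; translation-invariance of $AP(K)$ makes $\pi_s$ map $C$ into $C$, and each $\pi_s$ is linear (hence affine) and weak$^*$-continuous. Using the elementary identity $L_t(L_s f) = L_{p_s * p_t} f$, which follows from the definition of $L_\mu$ and associativity of $*$, one checks that
\[ \pi(s, \pi(t, m)) = \int_K \pi(\zeta, m)\, d(p_s * p_t)(\zeta), \]
so $\pi$ is a semihypergroup action. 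Separate continuity in the $K$-variable follows from $AP(K) \subseteq UC(K)$. For equicontinuity, a basic weak$^*$ entourage $U_{f,\varepsilon} = \{(m, m') : |m(f) - m'(f)| < \varepsilon\}$ (with $f \in AP(K)$) can be pulled back uniformly in $s$: since $\{L_s f : s \in K\}$ is relatively compact in $(C(K), \|\cdot\|_u)$, pick a finite $(\varepsilon/3)$-net $\{g_1, \ldots, g_n\} \subseteq AP(K)$ inside this orbit; the three-term estimate $|m(L_s f) - m'(L_s f)| \le 2\varepsilon/3 + \max_j |m(g_j) - m'(g_j)|$ (using $\|m\|, \|m'\| \le 1$) then shows $\bigcap_j U_{g_j, \varepsilon/3}$ is mapped into $U_{f, \varepsilon}$ by every $\pi_s$ simultaneously. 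The hypothesis now furnishes $m_0 \in C$ with $\pi_s(m_0) = m_0$ for all $s$, i.e., $m_0(L_s f) = m_0(f)$ for all $s, f$, which is a LIM on $AP(K)$.

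For ($\Rightarrow$), suppose $AP(K)$ admits a LIM $m$, let $\pi$ be a separately continuous, equicontinuous, affine action of $K$ on a compact convex $C \subseteq (E, Q)$, and fix $x_0 \in C$. For each $\phi \in E^*$, affineness of $\phi$ together with the Pettis-integral form of the action identity gives $L_t T_{x_0}^\pi \phi(s) = \phi(\pi_t \pi_s(x_0))$. The family $\{z \mapsto \phi(\pi_t(z)) : t \in K\}$ is equicontinuous on the compact space $C$ (by equicontinuity of $\{\pi_t\}$ and continuity of $\phi$), and pointwise bounded, hence relatively compact in $C(C)$ by Arzel\`a--Ascoli; precomposing with the continuous $s \mapsto \pi_s(x_0)$ shows the orbit of $T_{x_0}^\pi \phi$ is relatively compact in $C(K)$, so $T_{x_0}^\pi \phi \in AP(K)$. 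Express $m$ as a weak$^*$ limit of convex combinations $\mu_\alpha = \sum_i c_i^\alpha p_{s_i^\alpha}$ of point masses and set $y_\alpha := \sum_i c_i^\alpha \pi_{s_i^\alpha}(x_0) \in C$. Compactness of $C$ yields a subnet $y_{\alpha_\beta} \to x_\infty \in C$, and evaluating any $\phi \in E^*$ gives
\[ \phi(x_\infty) = \lim_\beta \phi(y_{\alpha_\beta}) = \lim_\beta \mu_{\alpha_\beta}(T_{x_0}^\pi \phi) = m(T_{x_0}^\pi \phi). \]
Applying separate continuity of $\pi_t$ to the same subnet, then left invariance of $m$, yields $\phi(\pi_t(x_\infty)) = m(L_t T_{x_0}^\pi \phi) = m(T_{x_0}^\pi \phi) = \phi(x_\infty)$ for every $\phi \in E^*$ and $t \in K$. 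Hahn--Banach separation then forces $\pi_t(x_\infty) = x_\infty$ for each $t$, giving the desired common fixed point.

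The main technical obstacle in both directions is the tight interplay between equicontinuity and almost periodicity: in ($\Leftarrow$) the equicontinuity of the dual action on means is precisely the reflection of $f \in AP(K)$ (total boundedness of $\{L_s f : s \in K\}$), while in ($\Rightarrow$) the membership $T_{x_0}^\pi \phi \in AP(K)$, which is what makes $m$ applicable at all, relies crucially on the equicontinuity of the action together with compactness of $C$. The remaining delicate point is coordinating the subnet extraction of $y_{\alpha_\beta} \to x_\infty$ with the pullthroughs of $\pi_t$ and $\phi$ so that the single identity $\phi(\pi_t(x_\infty)) = m(L_t T_{x_0}^\pi \phi)$ survives the limits; this is routine once separate continuity of $\pi_t$ and continuity of $\phi$ are in hand.
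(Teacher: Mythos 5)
Your proof is correct and follows the same overall architecture as the paper's: for sufficiency, the dual action $\pi_s(m)=L_s^*m$ on the weak$^*$-compact convex set of means on $AP(K)$, whose common fixed points are exactly the LIMs; for necessity, averaging an orbit point against a net of finite means converging weak$^*$ to the LIM and testing the cluster point against separating functionals. Two of your steps are worth noting because they are handled more carefully than in the paper. First, you verify equicontinuity of the dual action directly with respect to the weak$^*$ uniformity of $C$, via a finite $\varepsilon/3$-net in the norm-relatively-compact orbit $\{L_sf\}$; the paper instead establishes a non-expansiveness estimate for the auxiliary seminorms $p_f(\phi)=\sup_s|\phi(L_sf)|$, which generate a topology finer than the weak$^*$ topology actually placed on $E=AP(K)^*$, so your version addresses the uniformity that the definition of equicontinuity actually refers to. Second, you give a genuine proof that $T_{x_0}^\pi\phi\in AP(K)$, using equicontinuity of $\{\pi_t\}$ together with Arzel\`a--Ascoli on $C(C)$ and precomposition with $s\mapsto\pi_s(x_0)$; the paper asserts this membership follows from continuity of $f$ alone, which for merely separately continuous actions is not evident, so your argument correctly identifies equicontinuity as the hypothesis doing the work. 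Your use of $E^*$ (plus Hahn--Banach separation in the Hausdorff locally convex space $E$) in place of the paper's $A_f(C)$ is an immaterial variation.
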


\begin{proof}
First, assume that any separately continuous, equicontinuous affine action $\pi$ of $K$ on a compact convex subset $C$ of a separated locally convex space $E$ has a common fixed point.

Recall \cite{CB1} that the function $AP(K)$ is both left and right translation invariant. For each $s\in K$, consider the dual $L_s^*$ of the Left translation operator $L_s:AP(K)\ra AP(K)$, where $L_s^*: AP(K)^*\ra AP(K)^*$ is canonically defined as $$L_s^*\phi(f) = \phi(L_sf)$$ for each $\phi\in AP(K)^*, f\in AP(K)$. Let $C\subseteq AP(K)^*$ denote the set of all means on $AP(K)$ and $E:= AP(K)^*$. We equip $E$ with the weak$^*$ topology and define a map $\pi: K\times E \ra E$ by $$\pi(s, \phi) := L_s^*\phi$$
for each $s\in K, \phi\in E$. We know from \cite[Theorem 4.4]{CB4} that $\pi$ is an affine, separately continuous action of $K$ on $E$, and $C$ is weak$^*$-compact in $E$. Now to check if $\pi$ is equicontinuous, for each $f\in AP(K)$ let $p_f$ be the seminorm on $AP(K)^*$ given by $$p_f(\phi) := sup \{|\phi(L_sf)| : s\in K\}$$ for each $\phi\in AP(K)^*$. Set $Q:= \{p_f : f\in AP(K)\}$. 

Let $\varepsilon>0$ be given. Given $f\in AP(K)$, choose $\phi, \psi\in AP(K)^*$ such that $$sup_{s\in K} |(\phi - \psi)(L_sf)| = p_f(\phi-\psi) < \varepsilon .$$ Then for each $x\in K$ and any $\phi, \psi\in AP(K)^*, f\in AP(K)$ we have that
\begin{eqnarray*}
p_f(L_x^*\phi - L_x^*\psi)&=& sup_{s\in K} |(L_x^*\phi - L_x^*\psi)(L_sf)|\\
&=& sup_{s\in K} |(\phi-\psi)(L_x(L_sf))|\\
&=& sup_{s\in K} \{ \Big{|} \int_K (\phi-\psi) (L_uf) \ d(p_x*p_s)(u) \Big{|}\\
&\leq & sup_{s\in K} \int_K |(\phi-\psi)(L_uf)| \ d(p_x*p_s)(u)\\
&\leq & sup_{s\in K} \int_K p_f(\phi-\psi) \ d(p_x*p_s)(u)\\
&< &  \varepsilon \ (p_x*p_s)(K) \ = \ \varepsilon,
\end{eqnarray*}
as required. Now in particular, the given fixed point property provides us with a common fixed point $m_0\in C$ of $\pi$, i.e, we get a mean $m_0$ on $AP(K)$ such that $L_s^*m_0 = m_0$ for each $s\in K$. Hence $m_0$ serves as a LIM on $AP(K)$.

\vspace{0.1in}

Now conversely, let $AP(K)$ be left amenable and $m$ be a LIM on $AP(K)$. Furthermore, let $\pi$ be a separately continuous, equicontinuous, affine action of $K$ on a compact convex subset $C$ of a separated locally convex space $(E, Q)$. We denote by $A_f(K)$ the closed subspace of $C(K)$ consisting of all real valued continuous affine functions on $K$. For each $s\in K$, the evaluation map $E_s\in C(K)^*$ is canonically defined as $E_s(f) := f(s)$ for each $f\in C(K)$.

Since $m$ is a mean and the evaluation maps serve as extreme points of the unit ball, we know \cite{CB4} that there exists a net $\{\psi_\alpha\}$  of finite means such that $$\lim_\alpha \psi_\alpha(f) = m(f)$$ for each $f\in AP(K)$. Here for each $\alpha$ there exists some $n_\alpha\in \mathbb{N}$ such that $$\psi_\alpha = \sum_{i=1}^{n_\alpha} \lambda_i^\alpha E_{s_i^\alpha}$$ where each $s_i^\alpha\in K$ and each $\lambda_i^\alpha >0$ such that $\sum_{i=1}^{n_\alpha} \lambda_i^\alpha =1$. Now fix some $y\in C$ and consider the net $\{z_\alpha\}\subseteq C$ where for each $\alpha$ we set $$z_\alpha := \sum_{i=1}^{n_\alpha} \lambda_i^\alpha \pi(s_i^\alpha, y) .$$ Since $C$ is compact, let $z$ be a cluster point of $\{z_\alpha\}$ in $C$. On the other hand, for each bounded continuous function $f$ on $C$, consider the map $T_yf: K \ra \mathbb{C}$ given by $$T_yf(s):= f(\pi(s, y)) = (f\circ \pi_s) (y).$$ It follows from the continuity of $f$ that $T_yf\in AP(K)$ (we provide a general proof of this fact while discussing the proof of the next main theorem, as well). Hence in particular, passing through a subnet if necessary, without loss of generality, for each function $h\in A_f(C)$ we have that
\begin{eqnarray*}
h(z) \ = \ h(\lim_\alpha \ z_\alpha) &=& \lim_\alpha h\Big{(}\sum_{i=1}^{n_\alpha} \lambda_i^\alpha \pi(s_i^\alpha, y)\Big{)}\\
&=& \lim_\alpha \sum_{i=1}^{n_\alpha} \lambda_i^\alpha h(\pi(s_i^\alpha, y))\\
&=& \lim_\alpha \sum_{i=1}^{n_\alpha} \lambda_i^\alpha T_yh(s_i^\alpha)\\
&=& \lim_\alpha \sum_{i=1}^{n_\alpha} \lambda_i^\alpha E_{s_i^\alpha} (T_yh)\\
&=& \lim_\alpha \psi_\alpha(T_yh)\\
&=& m(T_yh) \ = \ m\big{(}L_s(T_yf)\big{)}
\end{eqnarray*}
for each $s\in K$, where the second last equality holds true since $T_yh\in AP(K)$ and the last equality follows from the left translation invariance of the mean $m$. Hence for each $s\in K, f\in A_f(C)$ we attain the following invariance:
\begin{eqnarray*}
h(z) \ = \ m\big{(}L_s(T_yh)\big{)} &=& \lim_\alpha \psi_\alpha \big{(}L_s(T_yh)\big{)}\\
&=& \lim_\alpha \sum_{i=1}^{n_\alpha} \lambda_i^\alpha E_{s_i^\alpha} \big{(}L_s(T_yh)\big{)}\\
&=& \lim_\alpha \sum_{i=1}^{n_\alpha} \lambda_i^\alpha L_s(T_yh)(s_i^\alpha)\\
&=& \lim_\alpha \sum_{i=1}^{n_\alpha} \lambda_i^\alpha \int_K T_yh(\zeta) \ d(p_s*p_{s_i^\alpha}) (\zeta)\\
&=& \lim_\alpha \sum_{i=1}^{n_\alpha} \lambda_i^\alpha \int_K h(\pi_\zeta(y)) \ d(p_s*p_{s_i^\alpha}) (\zeta)\\
&=& \lim_\alpha \sum_{i=1}^{n_\alpha} \lambda_i^\alpha h \Big{(} \int_K \pi_\zeta(y)\ d(p_s*p_{s_i^\alpha}) (\zeta)\Big{)}\\
&=&  h\Big{(} \lim_\alpha \sum_{i=1}^{n_\alpha} \lambda_i^\alpha \int_K \pi_\zeta(y)\ d(p_s*p_{s_i^\alpha}) (\zeta)\Big{)}\\
&=& h\Big{(} \lim_\alpha \sum_{i=1}^{n_\alpha} \lambda_i^\alpha (\pi_s\circ \pi_{s_i^\alpha}) (y)\Big{)}\\
&=& h\Big{(} \pi_s \big{(}\lim_\alpha \sum_{i=1}^{n_\alpha} \lambda_i^\alpha  \pi_{s_i^\alpha} (y)\big{)}\Big{)}\\
&=& h\Big{(} \pi_s (\lim_\alpha \ z_\alpha)\Big{)} \ = \ h(\pi(s, z)),
\end{eqnarray*}
where the seventh and eighth equality follows dince $h$ is affine and continuous respectively and the third last equality follows since $\pi$ is separately continuous and affine. But note that $A_f(C)$ separates the points of $C$. Hence we must have that $\pi(s, z) = z$ for each $s\in K$, as required
\end{proof}

\begin{theorem} \label{mt2}
Let $K$ be a semitopological semihypergroup. Then $AP(K)$ is left amenable if and only if every separately continuous, non-expansive action of $K$ on a compact convex subset $C$ of a separated locally convex space $E$ has a common fixed point.
\end{theorem}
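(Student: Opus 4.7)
The proof naturally splits into two directions, both modelled closely on Theorem \ref{mt1}.

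\emph{Sufficiency} (fixed-point property $\Rightarrow$ amenability). My plan is to reuse the dual representation built in the first half of the proof of Theorem \ref{mt1}. Equip $AP(K)^*$ with the separated locally convex topology induced by the family $Q := \{p_f : f \in AP(K)\}$, where $p_f(\phi) := \sup_{s \in K} |\phi(L_s f)|$, and take $C$ to be the convex set of means on $AP(K)$. First I would verify that $C$ is compact in this topology. It is weak*-compact, and on norm-bounded subsets of $AP(K)^*$ the $Q$-topology coincides with the weak* topology: because each $f \in AP(K)$ has norm-relatively compact orbit $\{L_s f : s \in K\}$ in $C(K)$, any uniformly bounded weak*-convergent net on $C$ converges uniformly on that orbit, which is exactly convergence in $p_f$. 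The action $\pi(s, \phi) := L_s^* \phi$ is already known from the proof of Theorem \ref{mt1} to be separately continuous and affine, and the explicit estimate displayed there gives $p_f(L_x^*\phi - L_x^*\psi) \leq p_f(\phi - \psi)$, showing that $\pi$ is non-expansive with respect to $Q$. The hypothesis then yields a common fixed point in $C$, which by construction serves as a LIM on $AP(K)$.

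\emph{Necessity} (amenability $\Rightarrow$ fixed-point property). Let $m$ be a LIM on $AP(K)$ and $\pi$ a separately continuous non-expansive action of $K$ on a compact convex $C \subseteq (E, Q)$. I plan to adapt the argument in the second half of the proof of Theorem \ref{mt1}: approximate $m$ by finite means $\psi_\alpha = \sum_{i=1}^{n_\alpha} \lambda_i^\alpha E_{s_i^\alpha}$, fix a base point $y \in C$, form the averages $z_\alpha := \sum_i \lambda_i^\alpha \pi(s_i^\alpha, y) \in C$, and pass (after a subnet) to a cluster point $z \in C$. Since the space $A_f(C)$ of continuous affine functions separates points of $C$, it suffices to establish $f(\pi_s z) = f(z)$ for each $f \in A_f(C)$ and each $s \in K$. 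Following the Theorem \ref{mt1} computation verbatim, one side gives $f(z) = \lim_\alpha \psi_\alpha(T_y^\pi f) = m(T_y^\pi f)$. For the other side, I would use the intertwining identity $T_y^\pi(f \circ \pi_s)(t) = L_s(T_y^\pi f)(t)$, which follows for affine $f$ from the semihypergroup expansion $\pi_s(\pi_t y) = \int \pi(\zeta, y)\, d(p_s * p_t)(\zeta)$, together with the left-invariance of $m$ to deduce $m(T_y^\pi(f \circ \pi_s)) = m(L_s(T_y^\pi f)) = m(T_y^\pi f) = f(z)$, and then identify the left-hand side with $f(\pi_s z)$ in the limit.

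The principal obstacle is precisely that final identification. In the proof of Theorem \ref{mt1}, affineness of $\pi_s$ permitted the exact pull-through $f(\pi_s z_\alpha) = \sum_i \lambda_i^\alpha f(\pi_s \pi(s_i^\alpha, y))$; under mere non-expansiveness this equality fails, and the residual $f(\pi_s z_\alpha) - \sum_i \lambda_i^\alpha f(\pi_s \pi(s_i^\alpha, y))$ must be controlled separately. I plan to bound it, via non-expansiveness and the affine continuity of $f$, by a constant times $\sum_i \lambda_i^\alpha p(z_\alpha - \pi(s_i^\alpha, y))$ for an appropriate seminorm $p \in Q$, and then to choose $\{\psi_\alpha\}$ sufficiently ``concentrated'' along a Mazur-type subnet --- the relative norm-compactness of the orbit $\{L_s(T_y^\pi f) : s \in K\}$ in $C(K)$ (which is precisely the almost-periodicity of $T_y^\pi f$) should provide enough compactness to drive the residual to zero. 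As an alternative route, if the direct estimate proves intractable, a Zorn's-lemma reduction to a minimal closed $\pi$-invariant subset of $C$, combined with the equicontinuity implied by non-expansiveness and the LIM hypothesis, should force the minimal set to collapse to a single point, yielding the required common fixed point.
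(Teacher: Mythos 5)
Your sufficiency direction is essentially correct and is what the paper intends: the dual action $\pi(s,\phi)=L_s^*\phi$ on the weak$^*$-compact convex set of means, with the seminorms $p_f(\phi)=\sup_{s\in K}|\phi(L_sf)|$, is non-expansive by the very estimate displayed in the proof of Theorem \ref{mt1}, and a common fixed point is a LIM. The problem is the necessity direction. Your primary route --- running the averaging argument of Theorem \ref{mt1} and then controlling the residual $f(\pi_s z_\alpha)-\sum_i\lambda_i^\alpha f(\pi_s\pi(s_i^\alpha,y))$ --- does not work. Non-expansiveness only bounds $p(\pi_s z_\alpha-\pi_s\pi(s_i^\alpha,y))$ by $p(z_\alpha-\pi(s_i^\alpha,y))$, and $z_\alpha$ is the barycentre of the points $\pi(s_i^\alpha,y)$, which may be spread over all of $C$; nothing in the definition of a LIM forces the approximating finite means to be ``concentrated'' in the sense you need, and the almost periodicity of $T_y^\pi f$ gives norm-compactness of an orbit in $C(K)$, not proximity of the orbit points $\pi(s_i^\alpha,y)$ to their average. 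So the residual has no reason to tend to zero, and this route collapses.

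Your fallback (reduction to a minimal invariant set) is the paper's actual argument, but the two mechanisms that make it work are missing from your sketch. First, the LIM $\psi$ is transported to a minimal closed $\pi$-invariant subset $F$ of a minimal closed convex $\pi$-invariant subset $X\subseteq C$ via $\phi(f):=\psi(T_y^\pi f)$ for $f\in C(F)$; the identity $T_y^\pi(f\circ\pi_s)=L_sT_y^\pi f$ makes $\phi$ a $\pi$-invariant mean on $C(F)$, hence a $\pi_s$-invariant probability measure $\lambda$ on $F$, and minimality of $F$ forces $supp(\lambda)=F$ and then $\pi_s(F)=F$ for every $s\in K$. Second, if $F$ contained two distinct points, one chooses $z_0\in\overline{co}\,F$ with $r_0:=\sup_{x\in F}p(z_0-x)$ strictly smaller than the $p$-diameter $r$ of $F$, and checks that $X_0:=X\cap\bigcap_{x\in F}B_p[x,r_0]$ is a nonempty, proper, closed, convex, $\pi$-invariant subset of $X$ --- the invariance of $X_0$ is precisely where non-expansiveness together with $\pi_s(F)=F$ enters --- contradicting the minimality of $X$. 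Hence $F$ is a singleton and is the common fixed point. Saying that ``the LIM hypothesis and equicontinuity should force the minimal set to collapse'' gestures at this but supplies neither the invariant measure step nor the Chebyshev-radius contradiction, so as written the proof is not complete.
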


\begin{proof}
First, let $\psi$ be a LIM on $AP(K)$, and $\pi$ be a separately continuous, non-expansive action of $K$ on a compact convex subset $C$ of a separated locally convex space $E$. 

It follows from a standard application of Zorn's lemma that there exists a  $X\subseteq C$ where $X$ is a non-empty minimal closed, convex, $\pi$-invariant subset. Furthermore, using the same technique for the set $X$, we can find a non-empty minimal closed $\pi$-invariant subset $F\subseteq X\subseteq C$. We fix an element $y\in F$ and consider the map $\phi: C(F) \ra \mathbb{C}$ given by $$\phi(f) := \psi(T_y^\pi f)$$ for each $f\in C(F)$. Note that the map is well defined since $T_y^\pi f \in AP(K)$ as each $f$ is continuous. Since $\psi$ is a mean on $AP(K)$, it follows by construction that $\phi$ is a mean on $C(F)$ as well.

Now for each $s\in K$, $f\in C(F)$ we define the map $N_sf:F\ra \mathbb{C}$ given by $$ N_sf(x) := f(\pi(s, x)) = f\circ \pi_s(x) = T_x^\pi f (s),$$
for each $x\in F$. It is immediate that $N_sf$ is continuous since both $f, \pi_s$ are continuous. Furthermore, note that  $T_y^\pi N_sf = L_sT_y^\pi f$ for any $s\in K$ since for any $s, t\in K$ we have that
\begin{eqnarray*}
T_y^\pi N_sf (t) &=& N_sf(\pi(t, y))\\
&=& f\Big{(}\pi(s, \pi(t, y))\big{)}\\
&=& \int_K f\big{(}\pi(\zeta, y)\big{)} \ d(p_s*p_t)(\zeta)\\
&=& \int_K T_y^\pi f(\zeta) \ d(p_s*p_t)(\zeta)\\
&=& T_y^\pi f(s*t) \ = \ L_sT_y^\pi f(t).
\end{eqnarray*}

Hence for each $s\in K$, $f\in C(F)$ we have the following equality:
\begin{eqnarray*}
\phi(N_sf) &=& \psi(T_y^\pi N_sf)\\
&=& \psi (L_s T_y^\pi f)\\
&=& \psi(T_y^\pi f) \ = \ \phi(f),
\end{eqnarray*}
where the third equality follows since $\psi$ is left invariant on $AP(K)$. Now, let $\lambda$ be the probability measure on $F$ such that $$\phi(f) = \int_F f \ d\lambda$$
for each $f\in C(F)$. For each $s\in K$ and any Borel measurable set $B\subset F$ consider the set $$\tilde{s}B := \{x\in F:  \pi(s, x)\in B\}.$$ Note that $\tilde{s}B$ will also be a Borel set in $F$ since $\pi_s$ is continuous, and hence induces a Borel measure $\tilde{s}\lambda$ on $F$ for each $s\in K$ defined as $$\tilde{s}\lambda (B):= \lambda(\tilde{s}B) = \lambda\big{(}\pi_s^{-1}(B)\big{)}.$$ In particular, for each $s\in K$ and any continuous function $f$ on $F$, since $\phi(N_sf) = \phi(f)$, we have that
\begin{eqnarray*}
 \int_F f \ d\lambda &=& \int_F N_sf \ d\lambda\\
 &=& \int_F f\circ \pi_s (x) \ d\lambda(x)\\
 &=& \int_F f(z) \ d(\lambda(\pi_s^{-1}(z)) \ = \ \int_F f \ d(\tilde{s}\lambda).
\end{eqnarray*}
Hence for each Borel set $B\subseteq F$ and any $s\in K$ we have that $\tilde{s}\lambda(B) = \lambda(B) = \lambda(\pi_s^{-1}(B))$.

Now, let $\mathcal{F}$ be the family of all closed subsets $A\subseteq F$ such that $\lambda(A)=1$. Consider the set $$F_0 := \bigcap_{A\in \mathcal{F}} A = supp(\lambda).$$ In particular, for any $s\in K$ since $\tilde{s}A\in \mathcal(F)$ for each $A\in \mathcal{F}$, we have that $F_0 \subseteq \pi_s^{-1}(F_0)$, i.e, $\pi_s(F_0)\subseteq F_0$. Hence by minimality of the closed $\pi$-invariant subset $F\subseteq C$, we have that $$F= F_0 = supp(\lambda).$$ On the other hand, for each $s\in K$ we have that $\lambda(\pi_s(F)) = \lambda(\pi_s^{-1}(\pi_s(F))) = \lambda (F) =1$. Hence $\pi_s(F) \in \mathcal{F}$, i.e, $F\supseteq \pi_s(F)\supseteq F_0 = F$.

Thus we have that $\pi_s(F) = F$ for each $s\in K$. Hence if $F$ is singleton, it serves as the required common fixed point of $\pi$. We claim that $F$ must be singleton in this case, and prove this by contradiction. 

If possible, let $F$ contains at least two distinct points. Then there exists a continuous seminorm $p\in Q$ such that $ r:= sup \{p(x-y): x, y\in F\}>0$. Then there exists some $z_0\in \bar{co}F$ such that $$0<r_0:= sup\{p(z_0-x): x\in F\} <r .$$
Now set $X_0 := X\cap \Big{(} \cap_{x\in F} B_p[x, r_0]\Big{)}$, where $B_p[x, r_0]$ denotes the closed $p$-ball $\{z\in X : p(z-x)\leq r_0\}$. Since $z_0\in X_0$ and $r_0<r$, we see that $X_0$ is a non-empty closed, convex, proper subset of $X$. 

In particular, $F\subseteq B_p[x, r_0]$ for each $x\in X_0$ since $p(z-x)\leq r_0$ for each $z\in F$ by construction of $X_0$. Since $\pi$ is non-expansive, for any $s\in K$, $x\in X_0, z\in F$ we have that $$p(\pi(s, x)- \pi(s, z))\leq p(x-z) \leq r_0 .$$ Therefore $F= \pi_s(F) \subseteq B_p[\pi(s, x), r_0]$, and hence $p(z-\pi(s, x)) \leq r_0$ for each $z\in F$, $s\in K, x\in X_0$. 

Hence finally, for any $x\in X_0$ we have that $\pi_s(x)\in B_p[z, r_0]$ for each $z\in F$, i.e, the closed, convex set $X_0$ is $\pi$-invariant as well. This contradicts the minimality of $X$ since $X_0\subseteq X$ is a proper subset. Therefore our claim that $F$ must be singleton is true, providing us with a common fixed point of the action $\pi$.

The converse of the statement follows similar lines as the proof pf the respective direction of the statement in Theorem \ref{mt1}.

\end{proof}

These fixed-point properties can further be improved by taking the locally convex space to be a dual Banach space $X^*$ in particular, equipped with the weak$^*$ topology.

\begin{theorem} \label{mt3}
$AP(K)$ is left amenable if and only if every separately continuous, equicontinuous, affine action of $K$ on a dual Banach space $X^*$ which admits an element $\omega_0\in X^*$ such that the orbit $\{\pi(s, \omega_0): s\in K\}$ is bounded,  has a common fixed point.
\end{theorem}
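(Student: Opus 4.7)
The plan is to reduce both directions to Theorem \ref{mt1}, exploiting the fact that a norm-bounded subset of a dual Banach space has weak*-compact weak* closure by Banach--Alaoglu.

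For the forward direction, suppose $AP(K)$ is left amenable and $\pi$ is a separately continuous, equicontinuous, affine action of $K$ on $X^*$ (with the weak* topology) admitting some $\omega_0 \in X^*$ with norm-bounded orbit. I would form the weak*-closed convex hull $C := \overline{\mathrm{co}}^{w^*}\{\pi(s, \omega_0): s \in K\}$. By Banach--Alaoglu applied to a norm ball containing the orbit, $C$ is a weak*-compact convex subset of $X^*$. To see that $C$ is $\pi$-invariant, I would invoke the semihypergroup action identity: for any $s,t \in K$, the element $\pi(s, \pi(t, \omega_0)) = \int_K \pi(\zeta, \omega_0)\, d(p_s*p_t)(\zeta)$ is a weak* barycenter of a probability measure supported on the orbit, hence lies in $C$; the weak*-continuity and affinity of $\pi_s$ then extend this to $\pi_s(C) \subseteq C$. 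The restricted action $\pi|_C$ remains separately continuous, equicontinuous, and affine on the weak*-compact convex subset $C$ of the separated locally convex space $X^*$, so Theorem \ref{mt1} delivers a common fixed point in $C \subseteq X^*$.

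For the reverse direction, I would apply the hypothesis to the canonical dual action on $AP(K)^*$. Setting $X^* := AP(K)^*$ with the weak* topology and $\pi(s,\phi) := L_s^*\phi$, the first half of the proof of Theorem \ref{mt1} already shows that $\pi$ is separately continuous, equicontinuous, and (being linear) affine. Choose $\omega_0 := E_e$ for any $e \in K$; since $L_s^*$ sends means to means (using $L_s \mathbf{1} = \mathbf{1}$ together with positivity) and the set of means lies in the closed unit ball, the orbit is bounded. The hypothesis then yields a common fixed point $m_0$, which, by the Banach--Alaoglu reduction underlying the forward direction, can be taken inside $\overline{\mathrm{co}}^{w^*}\{L_s^*\omega_0 : s \in K\}$. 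Since the set of means on $AP(K)$ is weak*-closed and convex, $m_0$ is itself a mean, and the identity $L_s^* m_0 = m_0$ for each $s \in K$ exhibits $m_0$ as a LIM on $AP(K)$.

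The main obstacle I anticipate is interpretive rather than technical: read literally, the hypothesis only asserts the bare existence of a common fixed point in $X^*$, and for a linear action such as $\phi \mapsto L_s^*\phi$ the zero functional is trivially fixed. The substance of the theorem therefore requires the fixed point to be produced inside the weak*-closed convex hull of the orbit of $\omega_0$; this stronger conclusion is exactly what the weak*-compactness reduction delivers, and the reverse direction is vacuous without it. Care must also be taken in the forward direction to ensure that equicontinuity passes from $X^*$ to the compact subset $C$ via the unique uniformity compatible with the relative weak* topology on $C$, so that Theorem \ref{mt1} can be invoked cleanly.
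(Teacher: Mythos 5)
Your direction ``left amenability implies the fixed point property'' is sound and matches the one-line sketch the paper gives for that implication: the bounded orbit has a weak$^*$-compact, convex, $\pi$-invariant weak$^*$-closed convex hull by Banach--Alaoglu (invariance coming from the barycentre identity $\pi(s,\pi(t,\omega_0))=\int_K\pi(\zeta,\omega_0)\,d(p_s*p_t)(\zeta)$ together with affinity and weak$^*$-continuity of $\pi_s$), and Theorem \ref{mt1} then applies to the restricted action.

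The genuine gap is in the direction ``fixed point property implies left amenability''. You apply the hypothesis to the linear action $\pi(s,\phi)=L_s^*\phi$ on $AP(K)^*$, and, as you yourself observe, the zero functional is a common fixed point of every linear action, so the hypothesis as stated gives you nothing for this choice. Your remedy --- asserting that the fixed point ``can be taken inside'' $\overline{\mathrm{co}}^{w^*}\{L_s^*\omega_0:s\in K\}$ --- is not licensed by the hypothesis; it amounts to proving a different theorem in which the fixed point property has been strengthened to locate the fixed point in the closed convex hull of a bounded orbit. The paper's proof shows how to avoid this: it does not use the linear dual action but the \emph{affine translate} $T_su:=L_{p_s}^*(u+v_0)-v_0$ acting on $X^*\simeq\mathbb{C}\mathbf{1}^{\perp}\subseteq AP(K)^*$, where $X=AP(K)/\mathbb{C}\mathbf{1}$ and $v_0\in AP(K)^*$ is any fixed functional with $v_0(\mathbf{1})=1$. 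This action is affine and separately continuous, every orbit is bounded (by $\|u+v_0\|+\|v_0\|$), and it has no trivial fixed point, since $T_s0=L_{p_s}^*v_0-v_0$ vanishes only if $v_0$ is already invariant. A common fixed point $w_0$ then satisfies $L_{p_s}^*(w_0+v_0)=w_0+v_0$ for all $s$, so $m_0:=w_0+v_0$ is a left-invariant functional with $m_0(\mathbf{1})=1$, from which the LIM on $AP(K)$ is extracted. So the bare existence statement in the theorem really is usable, but only after this translation trick; with the linear action you chose, the necessity direction does not go through.
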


\begin{proof}
First, assume that any separately continuous, equicontinuous, affine action $\pi$ of $K$ on a dual Banach space $X^*$ for which we can find   an element $\omega_0\in X^*$ such that the orbit $$\mathcal{O}(\omega_0)= \{\pi(s, \omega_0): s\in K\}$$ is bounded,  admits a common fixed point of the action $\pi$.

Given any $\mu\in M(K), f\in C(K)$, consider the averaging function $$L_\mu f := \int_K L_xf \ d\mu(x) .$$ Then for any $s, t\in K$ we have that 
\begin{eqnarray*}
R_t(L_\mu f) (s) &=& L_\mu f (s*t)\\
&=& \int_K L_\mu f(u) \ d(p_s*p_t)(u)\\
&=& \int_K\int_K L_x f(u) \ d\mu(x) \ d(p_s*p_t)(u)\\
&=& \int_K \int_K f(x*u) \ d(p_s*p_t)(u) \ d\mu(x) \\
&=& \int_K L_xf (s*t) \ d\mu(x) \ = \ \int_K R_t(L_xf)(s)  \ d\mu(x). 
\end{eqnarray*}
Hence for any $t\in K$ we have that $$R_t(L_\mu f) = \int_K R_t(L_xf) \ d\mu(x) = \int_K L_x(R_t f) \ d\mu(x) = L_\mu (R_tf). $$ Thus we see that if $f\in AP(K)$, then so is $L_\mu f$ for each $\mu\in M(K)$ since the map $f\mapsto L_\mu f: C(K)\ra C(K)$ is continuous, as for any  $f, g\in C(K)$ we have that 
\begin{eqnarray*}
|L_\mu f(s) - L_\mu g(s)| &=& \Big{|}\int_K L_x(f-g)(s) \ d\mu(x)\Big{|}\\
&\leq & \int_K |L_x(f-g)(s)| \ d|\mu |(x)\\
&\leq & \int_K ||L_x(f-g)||_\infty \ d|\mu|(x)\\
&\leq & ||f-g||_\infty |\mu|(K).
\end{eqnarray*} 

Now we consider the map $\Phi: M(K)\times AP(K) \ra AP(K)$ given by $$(\mu, f) \mapsto L_\mu f := \int_K L_xf \ d\mu(x) $$ for each $\mu\in M(K), f\in AP(K)$. We claim that $\Phi$ induces an $M(K)$-module action on $AP(K)$. To prove this, we pick any two elements $\mu, \nu\in M(K)$ and note that for any $s\in K$ we have that 
\begin{eqnarray*}
L_{\mu*\nu} f(s) &=& \int_K L_xf(s) \ d(\mu*\nu)(x)\\
&=& \int_K R_sf(x) \ d(\mu*\nu)(x)\\
&=& \int_K \int_K R_sf(x*z) \ d\mu(x) \ d\nu(z)\\
&=& \int_K \int_K L_xf(z*s) \ d\mu(x) \ d\nu(z)\\
&=& \int_K L_z(L_\mu f)(s) \ d\nu(z)\\
&=& L_\nu (L_\mu f)
\end{eqnarray*}
where the fourth equality follows since $f(x*y*z)= L_xf(y*z) = R_zf(x*y)$ for any $x, y , z\in K$ and any bounded measurable function $f$ on $K$. Moreover, the second last equality follows since
\begin{eqnarray*}
L_z(L_\mu f)(s) \ = \ L_\mu f(z*s) &=& \int_K L_\mu f (t) \ d(p_z*p_s)(t)\\
&=& \int_K\int_K L_xf(t) \ d\mu(x) \ d(p_z*p_s)(t)\\
&=& \int_K L_xf(t)  \ d(p_z*p_s)(t) \ d\mu(x) \ = \ \int_K L_xf(z*s) \ d\mu(x).
\end{eqnarray*}
Hence $E:=AP(K)$ is indeed a right $M(K)$-module with the module action $\Phi: \mu\mapsto L_\mu$. Now for any scalar $\alpha\in \mathbb{C}$, let $\alpha \mathbf{1}$ denote the constant function taking $\alpha$ on $K$. Then 
\begin{eqnarray*}
 L_\mu(\alpha \mathbf{1})(s) &=& \int_K L_x(\alpha \mathbf{1})(s) \ d\mu(x)\\
 &=& \int_K (\alpha \mathbf{1})(x*s) \ d\mu(x)\\
 &=& \int_K \int_K (\alpha \mathbf{1})(z) \ d (p_x*p_s) \ d\mu(x)\\
 &=& \alpha \int_K (p_x*p_s)(K) \ d\mu(x) \ = \ \alpha \mu(K).
 \end{eqnarray*}
Thus $\mathbb{C}\mathbf{1}$ is a submodule of $E$, and hence the quotient $X:= E/\mathbb{C}\mathbf{1}$ is a Banach right $M(K)$-module. Now we consider the Banach left $M(K)$-module $X^*$. Note that we can identify $X^*$ with the left submodule $\mathbb{C}\mathbf{1}^{\perp}$ of $E^*$ given by $$\mathbb{C}\mathbf{1}^{\perp} := \{u\in E^*: u(\mathbf{1}) = 0\}.$$
where the standard left module action is given by $\mu \mapsto L_\mu^*$. 

Next, we equip $X^*$ with the weak$^*$ topology and consider the map $\Psi: M(K)\times (X^*, weak^*) \ra (X^*, weak^*)$ given by $$\Psi(\mu, \phi) := L_\mu^*\phi .$$ In particular, for each $\mu\in M(K), \phi\in X^*, f\in X$ we have that $L_\mu^*\phi(f) = \phi(L_\mu f)$. Now for any $\mu, \nu\in M(K)$ and $\phi, \psi\in X^*$ the following holds true for each $f\in X$:
\begin{eqnarray*}
\Big{|} \Big{(}\Psi(\mu, \phi) - \Psi(\nu, \psi)\Big{)}(f)\Big{|} &=& \Big{|} \phi(L_\mu f) - \psi(L_\nu f)\Big{|}\\
&\leq & \Big{|} \phi(L_\mu f) -\phi(L_\nu f) \Big{|} +  \Big{|} \phi(L_\nu f) - \psi(L_\nu f)  \Big{|}\\
&\leq & ||\phi|| \ ||L_\mu f - L_\nu f ||_\infty + ||\phi-\psi|| \ ||L_\nu f||_\infty\\
&\leq & ||\phi|| \ ||f||_\infty \ ||\mu - \nu|| + ||\phi-\psi|| \ ||f||_\infty \ ||\nu|| \\
 &=& \ \Big{(}||\phi|| \  ||\mu-\nu|| + ||\nu|| \ ||\phi-\psi|| \Big{)} ||f||_\infty,
\end{eqnarray*}
where the third inequality follows since for each $s\in K$ we have $$|(L_\mu f- L_\nu f)(s)| = |\int_K L_x f \ d(\mu-\nu)(x)| \leq \int_K ||L_xf||_\infty \ d (|\mu-\nu|)(x) \leq ||f||_\infty \ |\mu-\nu|(K)$$ and similarly $$|L_\nu f(s)| \leq \int_K ||L_xf||_\infty \ d|\nu|(x) \leq ||f||_\infty \ |\nu|(K).$$ Hence $\Psi$ is jointly continuous.

Next, we choose some $v_0\in AP(K)^*$ such that $v_0(\mathbf{1}) = 1$. Then for any $\mu\in M(K)$ we have that $$L_\mu^*v_0 (\mathbf{1}) = v_0(L_\mu (\mathbf{1}) = v_0(||\mu|| \mathbf{1}) = ||\mu||.$$
Hence in particular, we have that $\big{(}L_{p_s}^*v_0 - v_0\big{)} \in \mathbb{C}\mathbf{1}^{\perp} \simeq X^*$. We consider the map $\pi:K \times X^*\ra X^*$ given by $\pi(s, u) := T_su$ for each $s\in K, u\in X^*$ where $T_su$ is given by $$ T_su := L_{p_s}^*(u + v_0) -v_0 .$$

Now in order to prove that $\pi$ indeed is an action of $K$ on $X^*$, we derive the following equality for any $s, t\in K$, $u\in X^*$ and for each $f\in X$.
\begin{eqnarray*}
T_s(T_tu)(f) &=& L_{p_s}^*(T_tu)(f) + L_{p_s}^* v_0(f) -v_0(f)\\
&=& T_tu(L_{p_s}f) + v_0(L_{p_s} f) - v_0(f)\\
&=& T_tu(L_sf) + v_0(L_s f) - v_0(f)\\
&=& \Big{(}L_{p_t}^*u(L_sf) + L_{p_t}^*v_0(L_sf) - v_0(L_sf)\Big{)} + v_0(L_s f) - v_0(f)\\
&=& (u+v_0)(L_t(L_sf))  - v_0(f)\\
&=& (u+v_0)(L_{(p_s*p_t)} f) - v_0(f)\\
&=& (u+v_0) \Big{(} \int_K L_\zeta f \ d(p_s*p_t)(\zeta)\Big{)} -v_0(f) \\
&=& \int_K \Big{(} u(L_\zeta f) + v_0 (L_\zeta f)\Big{)} \ d(p_s*p_t)(\zeta) - v_0(f) \int_K \ d(p_s*p_t)\\
&=& \int_K \Big{(} u(L_{p_\zeta} f) + v_0 (L_{p_\zeta} f) - v_0(f)\Big{)} \ d(p_s*p_t)(\zeta)\\
&=& \int_K \Big{(} L_{p_\zeta}^*u (f) + L_{p_\zeta}^*v_0(f)) - v_0(f)\Big{)} \ d(p_s*p_t)(\zeta)\\
&=& \int_K T_\zeta(f) \ d(p_s*p_t)(\zeta).
\end{eqnarray*}
Thus we see that $\pi$ is a jointly continuous affine representation of $K$ on $X^*$. Moreover, fixing any $u_0\in X^*$  we have that 
\begin{eqnarray*}
|T_su_0(f)| &=& |L_{p_s}^*(u_0+v_0)(f) - v_0(f)|\\
&=& |(u_0 +v_0) (L_sf) - v_0(f)|\\
&\leq & \big{(} ||u_0 + v_0|| + ||v_0||\big{)} \ ||f||_\infty,
\end{eqnarray*}
for each $s\in K, f\in X$, i.e, the orbit $\mathcal{O}(u_0)$ is bounded. Hence the given fixed point property ensures that $\pi$ has a common fixed point $w_0\in X^* \simeq \mathbb{C}\mathbf{1}^{\perp}$, i.e,  $$w_0 = T_sw_0= L_{p_s}^*(w_0+ v_0) - v_0$$ for each $s\in K$. In particular, setting $m_0 := w_0 + v_0$ we have that $$m_0(L_sf) = m_0(L_{p_s}f)= (w_0+ v_0)(L_{p_s}f) = L_{p_s}^*(w_0+ v_0) = w_0+v_0 = m_0,$$
for each $s\in K, f\in AP(K)$. Since $w_0(\mathbf{1})=0$, it immediately follows from the choice of $v_0$ that $m_0(\mathbb{1}) =1$. Finally, it is immediate from the duality property of functionals that $m_0$ is indeed a mean, and hence a LIM on $AP(K)$.

The converse of the statement follows as an application of the Krein- Milman theorem, using the boundedness of the orbit.
\end{proof}

\begin{remark}
Note that although these characterizations answer certain open questions affirmatively in terms of the interplay between amenability and fixed points of actions in the setting of coset spaces, orbit space, hypergroups and in general semitopological semihypergroups, there are still many open questions which remain unanswered. For example, it is not known if similar characterizations of amenability is possible for non-affine actions of a semihypergroup in this setting. Furthermore, it is not known whether the existence of a bounded orbit in Theorem \ref{mt3} can be replaced by some other equivalent condition on the representation, which is easier to check and more conventional in nature. Finally, it is only natural that the common fixed points of such actions would be intrinsically connected with the concept of $F$-algebraic amenability. Other that the interplays investigated in \cite{CB4, LZ} for certain kinds of general actions, this area remains unexplored so far.
\end{remark}


\section*{Acknowledgement}

\noi The author would like to sincerely thank her doctoral thesis advisor  Dr. Anthony To-Ming Lau, for suggesting the topic of this study and for the helpful discussions during the initiation of this work. She would also like to gratefully acknowledge the financial support provided by the Indian Institute of Technology Kanpur, India, Harish-Chandra Research Institute, India and SRM University AP, India  during the preparation of this manuscript. 


\begin{thebibliography}{}

\bibitem{AR}{Richard Arens \textit{The Adjoint of a Bilinear Operation}. Proc. Amer. Math. Soc., Vol. 2,  1951, 839-848.}

\bibitem{CB1} {Bandyopadhyay, Choiti \textit{Analysis on semihypergroups: function spaces, homomorphisms and ideals}. Semigroup Forum 100 (2020), no. 3, 671-697.}

\bibitem{CB2} {Bandyopadhyay, Choiti \textit{Free Product of Semihypergroups}. Semigroup Forum 102 (2021), no. 1, 28–47.}

\bibitem{CP} {Bandyopadhyay, Choiti; Mohanty, Parasar 	\textit{Equality in Hausdorff-Young for hypergroups}. Banach J. Math. Anal. 16 (2022), no. 4, Paper No. 62, 31 pp. }

\bibitem{CB3} {Bandyopadhyay, Choiti	\textit{Fixed Points and Continuity of Semihypergroup Actions}. \\ preprint: 	arXiv:2202.05956v2 [math.FA]}



\bibitem{CB4} {Bandyopadhyay, Choiti \textit{Topological Amenability of Semihypergroups}.  \\preprint: arXiv:2211.01479 [math.FA]}



\bibitem{BN} {Beckenstein, E, Narici, L \textit{Topological Vector Spaces}. Chapman \& Hall, 2011.}	

\bibitem{BH} {Bloom, W, Heyer, H \textit{Harmonic Analysis of Probability Measures on Hypergroups}. Walter de Gruyter Studies in Mathematics 20, 1995.}	

\bibitem{MI}{Berglund, J. F., Junghenn, H. D., Milnes, P. : \textit{Analysis on Semigroups}. Canadian Mathematical Society Series of Monographs and Advanced Texts, Wiley Interscience Publication. 24 (1989)}

	
	
 


\bibitem{DA}{Day, M. M. : \textit{Amenable Semigroups}. Illinois J. Math. 1, 509-544 (1957)}

\bibitem{DU}{ Dunkl, C. F. : \textit{The Measure Algebra of a Locally Compact Hypergroup}. Trans. Amer. Math. Soc. 179, 331-348 (1973)}


\bibitem{JE}{Jewett, R. I. : \textit{Spaces with an Abstract Convolution of Measures}. Advances in Mathematics. 18, no. 1,  1-101 (1975)}

\bibitem{KE}{Kelley, J. L.  \textit{General topology}. Van Nostrand, Princeton, N. J., (1955).}


\bibitem{LT}{ Lau, Anthony To-Ming, Takahashi, Wataru \textit{Invariant Means and Fixed Point Properties for Non-expansive Representations of Topological semigroups}. Topological Methods in Nonlinear Analysis Journal of the Juliusz Schauder Centre.
Volume 5, 1995, 39-57.}

\bibitem{LA1}{Lau, Anthomy T-M, \textit{Invariant Means on Almost Periodic Functions and Fixed Point Properties}. Rocky Mountain J. Math. 3 (1973), 69–76.}

\bibitem{LZ}{Lau, Anthony T-M, Zhang, Yong \textit{Amenability Properties and Fixed Point Properties of Affine Representations of Semigroups.} Proc. Amer. Math. Soc.  149, no. 11 (2021), 4815-4823. }




\bibitem{MT} {Michael, E. : \textit{Topologies on Spaces of Subsets}. Trans. Amer. Math. Soc. 71,  152-182 (1951)}

\bibitem{MIT} {Mitchell, T. :  \textit{Topological semigroups and fixed points}. Illinois J. Math. 14, 630–641 (1970)}




\bibitem{PA}{ Paterson, Alan L. T. \textit{Amenability}. Mathematical Surveys and Monographs, 29. American Mathematical Society, Providence, RI (1988).}


\bibitem{RI} {Rickert, Neil W. \textit{Amenable groups and groups with the fixed point property}. Trans. Amer. Math. Soc. 127 (1967), 221–232.}


\bibitem {RU} {Rudin, W. \textit{Functional Analysis}. Springer-Verlag 2nd Ed.}

\bibitem{WO}{Wong, J. \textit{Invariant Means on Locally Compact Semigroups}. Proc. Amer. Math. Soc.  31, no. 1  (1972), 39-45.}

\bibitem{ZE}{Zeuner, H. \textit{One Dimensional Hypergroups}. Advances in Mathematics 76, no. 1  (1989), 1-18.}
\end{thebibliography}
\end{document}